\newtheorem{thm}{Theorem}
\newtheorem{thm-known}[subsection]{Theorem}
\newtheorem{lem}[subsection]{Lemma}
\newtheorem{cor}[thm]{Corollary}
\theoremstyle{definition}
\newtheorem{defn}[subsection]{Definition}
\newtheorem{rem}[subsection]{Remark}
\renewcommand{\phi}{\varphi}
\renewcommand{\[}{\begin{equation*}}
\renewcommand{\]}{\end{equation*}}
\newcommand{\Tr}{\mathrm{Tr}}
\newcommand{\tr}{\mathrm{Tr}}
\newcommand{\h}{\mathsf{h}}
\begin{document}
\title[]{On the lower bounds of the $L^2$-norm of the Hermitian scalar curvature }
\author{Julien Keller}
\address{Julien Keller \\ Aix Marseille Universit\'e, CNRS, Centrale Marseille, Institut de Math\'ematiques
de Marseille, UMR 7373, 13453 Marseille, France }
\email{julien.keller@univ-amu.fr}
\author{Mehdi Lejmi}
\address{Mehdi Lejmi \\ Department of Mathematics, Bronx Community College of CUNY, Bronx, NY 10453, USA}
\email{mehdi.lejmi@bcc.cuny.edu}
\begin{abstract}
On a pre-quantized symplectic manifold, we show that the symplectic Futaki invariant, which is an obstruction to
the existence of constant Hermitian scalar curvature almost-K\"ahler metrics, is actually an asymptotic invariant.  This allows us to deduce a lower bound for the $L^2$-norm of the Hermitian scalar curvature
as obtained by S. Donaldson \cite{Don} in the K\"ahler case.

\end{abstract}

\maketitle
\section{Introduction}

Let $(M,\omega)$ be a symplectic manifold of (real) dimension $2n$.
An almost-complex structure $J$ is $\omega$-compatible
if the tensor $g(\cdot,\cdot)=\omega(\cdot,J\cdot)$
defines a Riemannian metric. The metric $g$ is called then an {{almost-K\"ahler metric}}. When
$J$ is integrable, $g$ is a K\"ahler metric. Given an almost-K\"ahler metric $g,$
one can define the {\it{canonical Hermitian connection}} (see~\cite{gau-1,lib}) $$\nabla_XY=D^g_XY-\frac{1}{2}J(D^g_XJ)Y,$$
where $D^g$ is the Levi-Civita connection of $g$ and $X,Y$ any vector fields on $M.$
The curvature of the induced Hermitian connection on the anti-canonical bundle $\Lambda^{n}(T^{1,0}_JM)$
is of the form $\sqrt{-1}\rho^\nabla.$ The closed (real) $2$-form $\rho^\nabla$ is called
the {\it{Hermitian Ricci form}} and it is a de Rham representative of $2\pi c_1(M, \omega)$ the first Chern class of the tangent bundle $TM$.
The {\it{Hermitian scalar curvature}} $s^\nabla$ of the almost-K\"ahler structure $(\omega,J)$
is then the normalized trace of $\rho^\nabla,$ i.e. 
\begin{equation*}
s^\nabla\omega^n=2n\rho^\nabla\wedge\omega^{n-1}.
\end{equation*}
When the metric is K\"ahler, $s^\nabla$ coincides with the (usual) Riemannian scalar curvature.

We fix now a $2n$-dimensional compact (connected) symplectic manifold $(M,\omega).$ We denote by $AK_\omega$ the (infinite dimensional) Fr\'echet space of all $\omega$-compatible
almost-complex structures and $C_\omega$ the subspace of $\omega$-compatible complex structures. It turns out that
the natural action of the Hamiltonian symplectomorphism group $Ham(M,\omega)$ on $AK_\omega$
is Hamiltonian~\cite{Don-1,Fuj} with moment map $\mu:AK_\omega\longrightarrow\left(Lie(Ham(M,\omega))\right)^\ast$
given by $\mu(J)(f)=\int_M s^\nabla f \frac{\omega}{n!},$ where $s^\nabla$ is the Hermitian scalar curvature of $(\omega,J).$
The induced metrics by the critical points of the functional (defined on $AK_\omega$)
\begin{eqnarray*}
\Vert\mu\Vert^2:J\longmapsto\int_M(s^\nabla)^2\frac{\omega^n}{n!}
\end{eqnarray*}
are called {\it{extremal almost-K\"ahler metrics}}~\cite{Apo-Dra,Lej}. These metrics appear then as a natural
extension of Calabi's extremal K\"ahler metrics~\cite{Cal,Cal-1} to the symplectic setting. The symplectic gradient of the Hermitian
scalar curvature of an extremal almost-K\"ahler metric turns out to be an infinitesimal isometry of the metric.
In particular, constant Hermitian scalar curvature almost-K\"ahler ({\it cHscaK} in short) metrics are extremal.

Furthermore, one can define 
a (geometric) {\it{symplectic Futaki invariant}} (in the K\"ahler case, see~\cite{Fut}). Explicitly,
we fix a compact group $G$ in the Hamiltonian symplectomorphism group $Ham(M,\omega).$
Let $\mathfrak{g}_\omega$ be the space of smooth functions  (with zero integral) which are Hamiltonians with respect to $\omega$
of elements of $\mathfrak{g}=Lie(G).$ Denote by $AK_\omega^G$ (resp. $C_\omega^G$)
the space of all $G$-invariant $\omega$-compatible almost-complex structures (resp  $G$-invariant $\omega$-compatible complex structures). 
Then, we define the map 
\begin{eqnarray*}
\mathcal{F}_\omega^G:\mathfrak{g}&\longrightarrow&\mathbb{R}\\
\mathcal{F}_\omega^G(X)&=&\int_Ms^\nabla \h\,\frac{\omega^n}{n!},
\end{eqnarray*}
where $\h\in\mathfrak{g}_\omega$ is the Hamiltonian induced by $X$
and $s^\nabla$ is the Hermitian scalar curvature induced by any $J\in AK_\omega^G.$
It turns out that $\mathcal{F}_\omega^G$ is independent of the choice of $J\in AK_\omega^G$~\cite{Gau,Lej}.
The map $\mathcal{F}_\omega^G$ is called the symplectic Futaki invariant {{relative to}} $AK_\omega^G$.
It readily follows that if $AK_\omega^G$ contains a cHscaK metric, then $\mathcal{F}_\omega^G\equiv 0.$

In the K\"ahler setting, the {\it{Donaldson--Futaki invariant}} defined in \cite{Don-2}
gives (non-trivial) lower bounds on the Calabi functional \cite{Cal,Cal-1} as proved by S. Donaldson in \cite{Don}.
The existence of constant scalar curvature K\"ahler ({\it cscK} in short) metrics is then
related to an algebro-geometric stability condition, called K-stability, introduced by G. Tian~\cite{Tia} for Fano manifolds (see also~\cite{Din-Tia}).
The {{Donaldson--Futaki invariant}}~\cite{Don,Don-2} is an algebraic invariant which can be defined for singular manifolds
and coincide with the geometric Futaki invariant~\cite{Fut} when the central fiber of the degeneration is smooth. Furthermore, the Donaldson-Futaki invariant has been also defined recently for Sasakian manifolds in \cite{Col-Sze}.

In this paper, we point out that the {{Donaldson--Futaki}} invariant {{may}} be extended
to the symplectic case. Our motivation is that, in the toric case, the existence
of an extremal K\"ahler metric is conjecturally equivalent to the existence of non-integrable extremal
almost-K\"ahler metrics~\cite{Don-2} (see also~\cite[Conjecture 2]{Apo-Cal-Gau-Fri}). Moreover, the examples of toric manifolds studied in~\cite{Don-2} which are not
K-stable do not admit even a cHscaK metric. A related question and also part of the motivation of this work is
the {\it{almost-K\"ahler Calabi-Yau equation}} on $4$-manifolds which has a unique solution if a conjecture of S. Donaldson~\cite{Don-5} holds
(see also~\cite[Question 6.9]{Li} and~\cite{Tos-Wei}). 

More explicitly, let $(M,\omega)$ be a compact symplectic manifold {\it{pre-quantized}} by 
a Hermitian line bundle $(L,h).$ We fix a compact group $G$ in $Ham(M,\omega).$
We consider a $G$-invariant $\omega$-compatible almost-complex structure $J.$
For an integer $k$, we define the {\it{renormalized}} Bochner--Laplacian operator $\Delta_k$ acting on the smooth sections of $L^k$. For a sufficiently large $k>0$, the space $\mathcal{H}_k$ of
the eigensections
of $\Delta_k$, with eigenvalues in some interval depending only on $L,$ is finite dimensional. An orthonormal basis of $\mathcal{H}_k$
gives a `nearly' symplectic and `nearly' holomorphic embedding $\Phi_k : M\longrightarrow \mathbb{P}\mathcal{H}_k^\ast$~\cite{Ma-Mar,Ma-Mar-1},
where the space $\mathbb{P}\mathcal{H}_k^\ast$ can be identified with a $N_k+1$ complex projective space.
Moreover, the line bundles $L^k$ and $\Phi_k^\ast\left(\mathcal{O}(1)\right)$ over $M$ are canonically isomorphic.
The Hermitian metrics $h^k$ on $L^k$ and $h^{\Phi_k^\ast\left(\mathcal{O}(1)\right)}$ (induced by the Hermitian metric on $\mathcal{O}(1)$) on $\Phi_k^\ast\left(\mathcal{O}(1)\right)$
are then related by
\[
h^{\Phi_k^\ast\left(\mathcal{O}(1)\right)}=\frac{h^{k}}{B_k},
\]
where $B_k$ is the {\it{generalized Bergman function}} defined in~(\ref{bergman_kernel}) (see~\cite[Theorem 8.3.11]{Ma-Mar-1}).

Furthermore, the dimension of the space $\mathcal{H}_k$ has an asymptotic expansion of the following type (as consequence of Theorem~\ref{bergman_asym}),
\[\begin{aligned}
   \dim\mathcal{H}_k &=a_0k^{n}+a_1k^{n-1}+O(k^{n-2}),\\
   &= k^n\int_M \frac{\omega^n}{n!} +\frac{ k^{n-1}}{4\pi}\int_Ms^\nabla\frac{\omega^n}{n!}+O(k^{n-2}). 
  \end{aligned}\]
where $s^\nabla$ is the Hermitian scalar curvature of $(\omega,J)$. Observe that the integral $\int_Ms^\nabla\frac{\omega^n}{n!}=\frac{4\pi}{(n-1)!}\int_M c_1(M, \omega)\wedge [\omega]^{n-1}$ is independent of the choice of $J.$

We choose a $S^1$-action $\Gamma$ on $(M,\omega)$ generated by a Hamiltonian vector field in $Lie(G).$ The $S^1$-action on $M$ can be lifted to $L^k$ and induces a linear action $A_k$ on the smooth sections of $L^k.$
Furthermore, this linear action fixes the space $\mathcal{H}_k$ since the $S^1$-action $\Gamma$ preserves the almost-K\"ahler metric induced by $J.$
The trace of this linear action admits an asymptotic expansion (as a consequence of Theorem~\ref{approx})
\[\begin{aligned}
 \mathrm{Tr}(A_k)&=b_0k^{n+1}+b_1k^{n}+O(k^{n-1}),\\
&=-k^{n+1}\int_{M}\h\,\frac{\omega^n}{n!}-\frac{k^n}{4\pi}\int_{M}\h s^\nabla\frac{\omega^n}{n!}+O(k^{n-1}),
  \end{aligned}\]
where the function $\h$ is a Hamiltonian of the $S^1$-action with respect to $\omega.$ We remark that the integral $\int_{M}\h s^\nabla\frac{\omega^n}{n!}$
is independent of the choice of $J\in AK_\omega^G$ the space of all $G$-invariant $\omega$-compatible almost-complex structures~\cite[Lemma 3.1]{Lej}.
%
%

\begin{defn}
The {\it{symplectic Donaldson--Futaki invariant}} $\mathcal{F}^G(\Gamma)$ of the $S^1$-action $\Gamma$ on $(M,L)$
generated by a Hamiltonian vector field in $Lie(G)$ is defined by 
\[
\mathcal{F}^G({\Gamma})=\frac{a_1}{a_0}b_0-b_1.
\]  
\end{defn}

Let $\chi_{\Gamma}:\mathbf{C}^*\hookrightarrow GL(N_{k}+1)$ be a one-parameter
subgroup, such that $\chi_{\Gamma}(S^1)\subset U(N_{k}+1)$ corresponds to the linear action induced by the $S^1$-action $\Gamma$ on $\mathcal{H}_{k}$, i.e. $\chi_{\Gamma}(t)=t^{A_{k}},$ for $t\in S^1.$
Now, we consider the {\it{degeneration}} induced by the family  $ \chi_{\Gamma}(t)\circ \Phi_{k}(M).$  
We suppose that $$M_0=\lim_{t\to 0} \chi_{\Gamma}(t)\circ \Phi_{k}(M)$$ {\it{exists as a symplectic variety with singular locus of complex dimension less than $n-1$}}
(the latter hypothesis is to ensure the existence of the integral~(\ref{eq:lim_ft}), see for instance~\cite{Din-Tia}). 
By definition, $M_0$ is preserved under the action of $\chi_{\Gamma}.$
Then, our main result is that the $L^2$-norm of the zero mean value of the Hermitian scalar curvature of any
$G$-invariant almost-K\"ahler structure whose symplectic form is $\omega$
is bounded below by the {{symplectic Donaldson--Futaki invariant.}}

 \begin{thm}\label{main_thm}

Let $AK_\omega^G$ be the space of all $G$-invariant $\omega$-compatible almost-complex structures. Suppose that $M_0$
exists as a symplectic variety with singular locus of complex dimension less than $n-1$ for all $k$ large and for any $S^1$-subgroup $\Gamma\subset G.$ Then,
\begin{eqnarray*} 
\inf_{{J\in AK_\omega^G }}\Vert s^\nabla - S^\nabla\Vert_{L^2}\geqslant \sup_{\Gamma\subset G}\left(-4\pi\frac{\mathcal{F}^G({\Gamma})}{\Vert\chi_{\Gamma}\Vert}\right),
\end{eqnarray*}
where we denoted 
$s^\nabla$ the Hermitian scalar curvature of $(\omega,J)$ with normalized average $S^\nabla=\frac{\int_Ms^\nabla\omega^n}{\int_M\omega^n}$ and 
$\Vert\chi_{\Gamma}\Vert$ is the leading term of the asymptotic expansion of the norm of the trace-free part $\underline{A_k}$ of $A_k$ i.e.
\begin{equation}\label{ineq0}
\mathrm{Tr}(\underline{A_k^2})=\Vert\chi_{\Gamma}\Vert^2k^{n+2}+O(k^{n+1}).
\end{equation}
The $L^2$-norm $\Vert \cdot\Vert_{L^2}$ is with respect to the volume form $\frac{\omega^n}{n!}$.
\end{thm}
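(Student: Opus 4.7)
The plan is to reduce the lower bound to a Cauchy--Schwarz inequality on $M$ after asymptotically identifying both $\mathcal{F}^G(\Gamma)$ and $\|\chi_{\Gamma}\|$ with integrals built from $\h$ and $s^\nabla$.

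First, I would substitute the four asymptotic coefficients listed in the introduction into the definition $\mathcal{F}^G(\Gamma)=\frac{a_1}{a_0}b_0-b_1$. With $a_0=\int_M\frac{\omega^n}{n!}$, $a_1=\frac{1}{4\pi}\int_M s^\nabla\frac{\omega^n}{n!}$, $b_0=-\int_M\h\frac{\omega^n}{n!}$ and $b_1=-\frac{1}{4\pi}\int_M\h s^\nabla\frac{\omega^n}{n!}$, a direct computation gives
\[
\mathcal{F}^G(\Gamma)=\frac{1}{4\pi}\int_M \h\,(s^\nabla-S^\nabla)\,\frac{\omega^n}{n!}=\frac{1}{4\pi}\int_M(\h-\overline{\h})(s^\nabla-S^\nabla)\frac{\omega^n}{n!},
\]
where $\overline{\h}=\frac{\int_M\h\,\omega^n}{\int_M\omega^n}$. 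This identifies $4\pi\mathcal{F}^G(\Gamma)$ with the geometric symplectic Futaki invariant (centered by $S^\nabla$) and makes it manifest that the quantity is independent both of the choice of $J\in AK_\omega^G$ and of the additive normalization of $\h$.

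Second, I would establish the companion asymptotic identity
\[
\|\chi_{\Gamma}\|^2=\int_M(\h-\overline{\h})^2\frac{\omega^n}{n!},
\]
via the generalized Bergman/Toeplitz kernel framework of Ma--Marinescu. The point is that $A_k$ is, up to lower order, the Berezin--Toeplitz quantization of $-k\h$, so a parallel analysis to the expansion of $\mathrm{Tr}(A_k)$ produces $\mathrm{Tr}(A_k^2)=k^{n+2}\int_M\h^2\frac{\omega^n}{n!}+O(k^{n+1})$; subtracting $(\mathrm{Tr}(A_k))^2/\dim\mathcal{H}_k$ and inserting the leading orders of $\dim\mathcal{H}_k$ and $\mathrm{Tr}(A_k)$ yields the displayed formula. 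The degeneration hypothesis on $M_0$---singular locus of complex codimension at least two---enters precisely here, to ensure that the trace integrals, when interpreted as limits along the flow $\chi_{\Gamma}(t)$ as $t\to 0$, are absolutely convergent, as anticipated by the remark following~(\ref{eq:lim_ft}).

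Combining the two identifications with Cauchy--Schwarz in $L^2(M,\omega^n/n!)$ yields
\[
-4\pi\mathcal{F}^G(\Gamma)=\int_M(\overline{\h}-\h)(s^\nabla-S^\nabla)\frac{\omega^n}{n!}\leq \|\h-\overline{\h}\|_{L^2}\,\|s^\nabla-S^\nabla\|_{L^2}=\|\chi_{\Gamma}\|\,\|s^\nabla-S^\nabla\|_{L^2}.
\]
Dividing by $\|\chi_{\Gamma}\|>0$, taking the infimum over $J\in AK_\omega^G$ on the left (the right-hand side being independent of $J$) and the supremum over $S^1$-subgroups $\Gamma\subset G$ on the right produces the theorem; any sign issue is absorbed by the freedom to replace $\Gamma$ by $\Gamma^{-1}$, which negates $\mathcal{F}^G(\Gamma)$ while preserving $\|\chi_{\Gamma}\|$.

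The hard part will be the second step. All the delicate analytic content---generalized Bergman kernel asymptotics on a possibly non-integrable almost-K\"ahler manifold, full $G$-equivariance, and an expansion of $\mathrm{Tr}(A_k^2)$ with its constants pinned down sharply---is concentrated in the identity for $\|\chi_{\Gamma}\|^2$, and it is precisely the singularity hypothesis on $M_0$ that lets the Bergman kernel machinery deliver that identity.
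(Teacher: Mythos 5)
Your argument is correct in substance, but it reaches the inequality by a genuinely different and shorter route than the paper's. Both proofs rest on the same two hard analytic inputs: the identification of $a_0,a_1,b_0,b_1$ with integrals over $M$ (the paper's \eqref{dim_exp} and \eqref{A_exp}, which require Theorems~\ref{bergman_asym} and~\ref{approx}), and the expansion $\Tr(A_k^2)=k^{n+2}\int_M\h^2\frac{\omega^n}{n!}+O(k^{n+1})$ of Lemma~\ref{tr2}, which requires the $Q$-operator asymptotics of Theorem~\ref{Qkthm}; your formula for $\Vert\chi_\Gamma\Vert^2$ agrees with Corollary~\ref{lastcor}, and your identity $4\pi\mathcal{F}^G(\Gamma)=\int_M(\h-\widehat{\h})(s^\nabla-S^\nabla)\frac{\omega^n}{n!}$ is a correct consequence of the stated coefficients. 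Given these, you conclude by Cauchy--Schwarz directly in $L^2(M,\frac{\omega^n}{n!})$, whereas the paper runs the quantized version of the same estimate: it bounds $\Vert s^\nabla-S^\nabla\Vert_{L^2}$ from below through the matrices $\underline{M}(\Phi_k)$ (Lemma~\ref{seq_embed}), uses the monotonicity of $f(t)$ together with the central fibre $M_0$ to bound $-\Tr(\underline{A_k}\,\underline{M}(\Phi_k))$ from below by $k^n\bigl(b_1-\frac{a_1}{a_0}b_0\bigr)+O(k^{n-1})$, and applies Cauchy--Schwarz at the matrix level in \eqref{cauchy_schwarz} before letting $k\to\infty$. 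For the product-type degenerations actually considered in the theorem (an honest $S^1$-action on $M$ itself), the two bounds coincide at leading order, so your shortcut is legitimate; what the paper's finite-dimensional argument buys is a template that could survive for degenerations whose Futaki invariant is not expressible as an integral of a smooth Hamiltonian over $M$. One inaccuracy to flag: you claim the hypothesis on $M_0$ is needed to make the trace integrals behind $\Tr(A_k^2)$ converge, but Lemma~\ref{tr2} is a computation on $M$ alone and never sees the central fibre; in the paper that hypothesis is used only to make sense of $\lim_{t\to 0}f(t)$ as the integral \eqref{eq:lim_ft} over $M_0$ and to justify \eqref{first_term}. Your route therefore does not use the hypothesis at all---logically harmless, but your stated justification for where it enters is misplaced.
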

The asymptotic expansion of $\mathrm{Tr}({A_k^2})$ is computed in Lemma~\ref{tr2} while the expression of $\Vert\chi_{\Gamma}\Vert$ is given by Corollary~\ref{lastcor}.
Our proof of \eqref{ineq0} is direct and differs in part from \cite{Don} (see also the reference \cite{gabor}).

\par Theorem \ref{main_thm} indicates that one can {\it{possibly}} define a notion of {\it{stability}} for the existence of almost-K\"ahler metrics
with constant Hermitian scalar curvature and study the uniqueness of such metrics as done by S. Donaldson in~\cite{Don-3} in the K\"ahler case.
In order to do so, one probably needs to generalize the notion of {{test-configurations}} to the almost-K\"ahler setting
by using symplectic Deligne-Mumford stacks. 

\medskip 

Let us discuss some applications of Theorem \ref{main_thm}. A direct corollary is the following result.
\begin{cor}\label{corollaire}
Suppose that $M_0$ exists as a symplectic variety with singular locus of complex dimension less than $n-1$ for all large $k$ and for any $S^1$-subgroup $\Gamma\subset G.$ 
If an almost-K\"ahler structure $(\omega,J)$ has a constant Hermitian scalar curvature, for any $J\in AK_\omega^G,$ then $\mathcal{F}^G({\Gamma})\geqslant 0$
for any $S^1$-subgroup $\Gamma\subset G.$
\end{cor}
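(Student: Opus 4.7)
The plan is to derive the corollary directly from Theorem~\ref{main_thm} by showing that the hypothesis forces the left-hand side of the main inequality to vanish. First I would observe that if some $J \in AK_\omega^G$ yields a constant Hermitian scalar curvature $s^\nabla$, then the normalized average $S^\nabla = \frac{\int_M s^\nabla \omega^n}{\int_M \omega^n}$ simply equals the constant value of $s^\nabla$, so pointwise $s^\nabla - S^\nabla \equiv 0$ and hence $\Vert s^\nabla - S^\nabla\Vert_{L^2} = 0$ for that $J$.

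Since the $L^2$-norm is non-negative, this exhibited value of zero shows
\[
\inf_{J \in AK_\omega^G} \Vert s^\nabla - S^\nabla\Vert_{L^2} = 0.
\]
Plugging this into the inequality of Theorem~\ref{main_thm} yields
\[
0 \geqslant \sup_{\Gamma \subset G}\left(-4\pi\,\frac{\mathcal{F}^G(\Gamma)}{\Vert \chi_\Gamma\Vert}\right),
\]
valid under the standing hypothesis that $M_0$ exists as a symplectic variety with singular locus of complex dimension less than $n-1$ for all large $k$ and for any $S^1$-subgroup $\Gamma \subset G$.

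Finally, for every individual $S^1$-subgroup $\Gamma \subset G$, the supremum dominates the corresponding term, so
\[
-4\pi\,\frac{\mathcal{F}^G(\Gamma)}{\Vert \chi_\Gamma\Vert} \leqslant 0.
\]
Since $\Vert \chi_\Gamma\Vert \geqslant 0$ (and is strictly positive whenever the action is nontrivial, so that the inequality can be divided meaningfully; for the trivial action $\mathcal{F}^G(\Gamma) = 0$ holds automatically), multiplying through yields $\mathcal{F}^G(\Gamma) \geqslant 0$, as required.

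There is no real obstacle here: the corollary is a one-line consequence of the main theorem once one notes that constancy of $s^\nabla$ implies $s^\nabla = S^\nabla$. All the analytical weight — the asymptotic expansions, the Bergman-kernel estimates, the comparison between $\mathrm{Tr}(A_k)$, $\mathrm{Tr}(A_k^2)$ and the relevant integral invariants — is carried by Theorem~\ref{main_thm} itself.
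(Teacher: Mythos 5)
Your proof is correct and is exactly the intended argument: the paper presents this statement as a ``direct corollary'' of Theorem~\ref{main_thm} with no further proof, and your observation that constancy of $s^\nabla$ forces $s^\nabla = S^\nabla$ and hence $\Vert s^\nabla - S^\nabla\Vert_{L^2}=0$, making the infimum zero, is the whole content. Your extra care with the trivial-action case (where $\Vert\chi_\Gamma\Vert=0$ and division is not meaningful) is a sensible precaution consistent with the formula $\Vert\chi_\Gamma\Vert^2=\int_M(\h-\widehat{\h})^2\frac{\omega^n}{n!}$ from Corollary~\ref{lastcor}.
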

A consequence of Corollary~\ref{corollaire} is that if for a $S^1$-action $\Gamma\subset G$ on a K\"ahler manifold $(M,\omega,J),$ $\mathcal{F}^G({\Gamma})<0$,
then there is no cscK metrics in the K\"ahler class $[\omega]$ on the complex manifold $(M,J)$ since the symplectic Donaldson--Futaki invariant coincides with the Donaldson--Futaki invariant.
Furthermore, we want to stress the fact that there is {\it{no cHscaK metric in $AK_\omega^G$.}} 
In other words, a destabilizing test-configuration in the K\"ahler setting would imply non existence even of cHscaK metrics.
We observe that the K\"ahler metrics in the K\"ahler class $[\omega]$ can be seen as a subspace of $AK_\omega^G$ via Moser's Lemma (see for example~\cite{Duf-Sal}).
If we consider the K-unstable toric examples studied in~\cite{Don-2} for which the destabilizing test configurations satisfy our assumptions, we recover this way the fact that they don't carry cHscaK structures.  We have extra examples of such phenomena for projective bundles.

\begin{cor}\label{corollaire2} 
 Consider $E$ a holomorphic vector bundle over a complex curve of genus $g\geq 2$ of rank $\mathrm{rk}(E)$. Let $\mathbb{P}(E)$ be the complex manifold underlying the total space of the projectivization of $E$. 
 \begin{itemize}
  \item If $\mathrm{rk}(E)= 2$, then the ruled surface $\mathbb{P}(E)$ admits a cHscaK metric if and only if $E$ is polystable.
  \item If $\mathrm{rk}(E)>2$, then the ruled manifold $\mathbb{P}(E)$ admits a cHscaK metric $\omega$ with $C_\omega^{S^1}\neq\emptyset$ if and only if $E$ is polystable. 
 \end{itemize}
\end{cor}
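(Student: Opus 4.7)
The plan is to combine Corollary~\ref{corollaire} with classical stability results for projective bundles over curves.

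\emph{Sufficiency (both cases).} If $E$ is polystable, then by the Narasimhan--Seshadri theorem $E$ carries a projectively flat unitary structure, so $\mathbb{P}(E)$ is a locally homogeneous $\mathbb{CP}^{\mathrm{rk}(E)-1}$-bundle over the curve. Taking the product of a constant-curvature metric on the base with the Fubini--Study metric on the fibers and descending to $\mathbb{P}(E)$ produces a cscK K\"ahler metric, which is automatically cHscaK since the two notions of scalar curvature coincide in the integrable case; the fiber unitary symmetries preserve the complex structure, so the condition $C_\omega^{S^1}\neq\emptyset$ is satisfied for free in the higher rank case.

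\emph{Necessity.} Conversely, assume $\mathbb{P}(E)$ admits a cHscaK metric meeting the stated hypotheses but suppose for contradiction that $E$ is not polystable. Pick a proper destabilizing subbundle $F\subsetneq E$ and consider the degeneration of $\mathbb{P}(E)$ to $M_0 = \mathbb{P}(\mathrm{gr}(E))$ with $\mathrm{gr}(E) = F\oplus(E/F)$, obtained as the deformation to the normal cone of $\mathbb{P}(F)\subset\mathbb{P}(E)$. This degeneration is induced by the natural $\mathbb{C}^\ast$-action on $\mathbb{P}(\mathrm{gr}(E))$ that scales the two summands with distinct weights, and its restriction $\Gamma$ to $S^1$ is the one-parameter subgroup to which we want to apply Corollary~\ref{corollaire}. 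Since the central fiber is a smooth projective bundle, the hypothesis on the singular locus in Theorem~\ref{main_thm} is trivially satisfied, and a direct computation, compatible with the asymptotic expansions of the introduction and carried out on the K\"ahler side by Ross--Thomas, shows that the symplectic Donaldson--Futaki invariant $\mathcal{F}^G(\Gamma)$ equals the algebraic Donaldson--Futaki invariant of this test configuration, which is strictly negative whenever $E$ is unstable.

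For $\mathrm{rk}(E)>2$, the assumption $C_\omega^{S^1}\neq\emptyset$ directly produces an $S^1$-invariant compatible integrable complex structure, so choosing $G\supseteq\Gamma$ places us within the hypothesis of Corollary~\ref{corollaire}, which forces $\mathcal{F}^G(\Gamma)\geq 0$ and contradicts the previous paragraph. For $\mathrm{rk}(E)=2$, no such hypothesis is needed: one would argue instead that on a minimal ruled surface the identity component of the isometry group of any cHscaK metric automatically contains the destabilizing $S^1$, using the classification of almost-K\"ahler structures on ruled surfaces in real dimension four together with the fact that the symplectic gradient of the Hermitian scalar curvature vanishes identically. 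The main obstacle I anticipate lies precisely in this last step, since dispensing with $S^1$-invariance requires four-dimensional rigidity results for almost-K\"ahler metrics that are not available in higher dimensions, and it is exactly this gap that forces the supplementary hypothesis $C_\omega^{S^1}\neq\emptyset$ when $\mathrm{rk}(E)>2$.
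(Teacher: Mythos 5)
Your sufficiency argument and the general shape of your necessity argument coincide with the paper's: polystability gives a projectively flat structure and hence a cscK (so cHscaK) metric via Narasimhan--Seshadri, and in the other direction one destabilizes by a deformation to the normal cone, uses that the central fibre is a smooth projective bundle so that the symplectic Donaldson--Futaki invariant reduces to the algebraic one, quotes the Ross--Thomas computation (\cite{Ross-Thomas}, see also \cite{DVZ}) for its sign, and invokes Corollary~\ref{corollaire}. Two caveats on that part. First, the paper is more careful about which subbundle to use: it works with a splitting $E=E_1\oplus\cdots\oplus E_s$ and deforms to the normal cone of $\mathbb{P}(F\oplus E_2\oplus\cdots\oplus E_s)$ for $F$ a destabilizing subbundle of a single summand $E_1$, the Futaki invariant being a positive multiple of $\mu(E_1)-\mu(F)$; your version, with $F\subsetneq E$ and the conclusion ``strictly negative whenever $E$ is unstable,'' does not by itself address bundles that are semistable but not polystable, which the statement requires.

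The genuine gap is the rank-two case. You propose to dispense with the hypothesis $C_\omega^{S^1}\neq\emptyset$ by arguing that the identity component of the isometry group of any cHscaK metric on a minimal ruled surface automatically contains the destabilizing circle, citing ``the classification of almost-K\"ahler structures on ruled surfaces'' together with the vanishing of the symplectic gradient of the Hermitian scalar curvature. Neither ingredient does the work: the symplectic gradient of a constant function vanishes tautologically and produces no isometries, and there is no classification of compatible almost-complex structures on ruled surfaces over higher-genus curves that would force an arbitrary cHscaK metric to be invariant under the particular circle needed to destabilize. The paper's mechanism is different: in real dimension four any such symplectic form admits a compatible \emph{integrable} complex structure (see \cite{AGK} and the references therein), so $C_\omega\neq\emptyset$ holds automatically and one is placed back in the K\"ahler setting where the Ross--Thomas computation and Corollary~\ref{corollaire} apply; this is exactly what fails in higher rank, where compatible integrable structures need not exist at all (cf.\ \cite{CP}), and is why the supplementary hypothesis appears only for $\mathrm{rk}(E)>2$. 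As written, your argument for the first bullet point does not close.
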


\hfill

{\small
\noindent {\bf Acknowledgments.} 
The authors are very thankful to Wen Lu, Xiaonan Ma and George Marinescu for sharing their paper \cite{Lu-Ma-Mar}.
ML is grateful to Gabor Sz\'ekelyhidi for  very useful discussions and JK thanks Dmitri Panov. Both authors are grateful to Vestislav Apostolov. The work of JK has been carried out in the framework of the Labex Archim\`ede (ANR-11-LABX-0033) and of the A*MIDEX project (ANR-11-IDEX-0001-02), funded by the ``Investissements d'Avenir" French Government programme managed by the French National Research Agency (ANR). JK is also partially supported by supported by the ANR project EMARKS, decision No ANR-14-CE25-0010.
}
%


%

\section{Generalized Bergman kernel}

In order to generalize the lower bounds on the Calabi functional as done by S. Donaldson~\cite{Don} to the symplectic case,
we use the eigensections of the {\it {renormalized Laplacian}} operator~\cite{Bor-Uri,Gui-Uri},
defined on smooth sections of a Hermitian line bundle over a compact symplectic manifold, as natural substitutes for the holomorphic sections. Note that we are not working with another natural operator, the spin$^c$ Dirac operator for which other results about Bergman kernel exist, see \cite{DLM}.

More precisely, let $(M,\omega)$ be a compact symplectic manifold of dimension $2n$.
Suppose that $(M,\omega)$ is {\it{pre-quantized}} by a Hermitian complex line bundle $(L,h)$ which means that the curvature $R^{\nabla^L}$ of some Hermitian connection
$\nabla^L$ of $L$ satisfies $$\frac{\sqrt{-1}}{2\pi}\,R^{\nabla^L}=\omega.$$ This means that
the de Rham class $[{\omega}]$ is integral.

Fix an almost-complex structure $J$ compatible with $\omega$ and denote by $g(\cdot,\cdot)=\omega(\cdot,J\cdot)$ the induced almost-K\"ahler metric. This defines a Laplacian operator $\Delta^{L^k}$ on $L^k$
acting on smooth sections of $L^k$, for $k>0$. Explicitly, $$\Delta^{L^k}=-\sum_{i=1}^{2n}\left(\nabla^{L^k}_{e_i}\right)^2-\nabla^{L^k}_{\left(D^g_{e_i}e_i\right)},$$ where $D^g$
is the Levi-Civita connection with respect to $g$ and $\{e_i\}$ is a local $g$-orthonormal basis of $TM$. The Hermitian metric $h^k$ and connection $\nabla^{L^k}$ on $L^k$
are induced by $h$ and $\nabla^L.$ The renormalized Laplacian is given then by
$$\Delta_k=\Delta^{L^k}-2\pi nk.$$

By a result in~\cite{Ma-Mar-2}, there exists two constants $C_1,C_2>0$ independent of $k$ such that the spectrum of $\Delta_k$ is contained in $(-C_1,C_1)\cup(k\,C_2,+\infty)$ (see also~\cite{Bor-Uri,Gui-Uri}).
Let $\mathcal{H}_k\subset C^\infty(M,L^k)$ be the span of the eigensections of $\Delta_k$ with eigenvalues in $(-C_1,C_1)$.
The space $\mathcal{H}_k$ is then finite dimensional and for large $k$~\cite{Bor-Uri,Gui-Uri,Ma-Mar-2}
\[
\dim \mathcal{H}_k=\int_Me^{k[\omega]}\,Td(T^{1,0}_JM),
\]
where $Td(T^{1,0}_JM)$ is the Todd class of the (complex) vector bundle $T^{1,0}_JM.$

\begin{rem}
When $g$ is K\"ahler and $L$ is a holomorphic Hermitian line bundle, the operator $\Delta_k$ coincides with the $\overline\partial$-Laplacian, 
by the Bochner--Kodaira formula (e.g~\cite[Proposition 3.71]{Ber-Get-Ver}). Then, for large $k$, the space $\mathcal{H}_k$ is exactly the
space of holomorphic sections of $L^k.$
\end{rem} 
 
On sections of $L^k$, we define the inner product
\begin{equation}\label{inner}\langle s_1,s_2\rangle_{L^2}=\int_M(s_1,s_2)_{h^k}\frac{(k\omega)^n}{n!}.\end{equation}

Let $\{s_0,\cdots,s_{N_k}\}$ be an orthonormal basis of $\mathcal{H}_k$. At $x\in M$, the {\it{generalized Bergman function}}  is defined as the restriction to the diagonal of the Bergman kernel, i.e by the formula
\begin{equation}\label{bergman_kernel}
B_k(x)=\sum_{i=0}^{N_k}|s_i(x)|^2_{h^k}.
\end{equation}

X. Ma-- G. Marinescu proved the following asymptotic expansion.
\begin{thm-known}~\cite{Ma-Mar,Ma-Mar-1}\label{bergman_asym}
When $k\rightarrow\infty$,
\begin{equation}\label{expansion}
B_k=1+\frac{s^\nabla}{4\pi}k^{-1}+O(k^{-2}),
\end{equation}
valid in $C^l$ for any $l\geq 0.$ Here, $s^\nabla$ denotes the Hermitian scalar curvature of $(\omega,J)$.
\end{thm-known}

Let $\mathbb{P}\mathcal{H}_k^\ast$ be the projective space associated to the dual of $\mathcal{H}_k$. 
Moreover, once we fix a basis of $\mathcal{H}_k$,
we have an identification $\mathbb{P}\mathcal{H}_k^\ast\cong\mathbb{CP}^{N_k}.$ We have then the following
\begin{thm-known}[\cite{Ma-Mar,Ma-Mar-1}]~
For large $k$, the Kodaira maps $\Phi_k : M\longrightarrow \mathbb{P}\mathcal{H}_k^\ast,$ given by 
\begin{equation*}
\Phi_k(x)=\{s\in\mathcal{H}_k \,|\, s(x)=0\}
\end{equation*}
are well-defined.
\end{thm-known}

Observe that there is a well-defined Fubini-Study form $\omega_{FS}$ on $\mathbb{P}\mathcal{H}_k^\ast$ with a compatible metric $g_{FS}.$ We have then
\begin{thm-known}[\cite{Ma-Mar,Ma-Mar-1}]\label{embedding}~
For large $k,$ we have in $C^\infty$-norm
\begin{align*}
\frac{1}{k}\Phi_k^\ast(\omega_{FS})-\omega=O(k^{-1}),\\
\frac{1}{k}\Phi_k^\ast(g_{FS})-g=O(k^{-1}).
\end{align*}
Moreover, the maps $\Phi_k$ are embeddings and `nearly holomorphic' i.e.
\begin{align*}
\frac{1}{k}\Vert\bar\partial\Phi_k\Vert=O(k^{-1}),\quad\frac{1}{k}\Vert\partial\Phi_k\Vert\geqslant C,\quad {\mathrm{ for\,some }}\,\, C>0.
\end{align*}
\end{thm-known}

Very recently, W. Lu-- X. Ma-- G. Marinescu improved the speed rate of the approximation of the symplectic form. This improvement is actually crucial to obtain the main result of the paper.

\begin{thm-known}[\cite{Lu-Ma-Mar}]~\label{approx}~
For large $k,$ we have in $C^\infty$-norm
\begin{align*}
\frac{1}{k}\Phi_k^\ast(\omega_{FS})-\omega=O(k^{-2}).
\end{align*}
\end{thm-known}

\section{Lower bounds on the $L^2$-norm of the Hermitian scalar curvature}

Let $(M,\omega)$ be a compact symplectic manifold pre-quantized by a Hermitian complex line bundle $(L,h).$
We fix an $\omega$-compatible almost-complex structure $J.$

Given an embedding $\Phi_k:M\longrightarrow \mathbb{P}\mathcal{H}_k^\ast$, for a sufficiently large $k>0$ as in Theorem~\ref{embedding}, we define
a matrix $M(\Phi_k)$ with entries
\[ M(\Phi_k)_{ij} = \int_{M}
\Phi_k^*\left(\frac{Z^i\overline{Z}^j}{|Z|^2}\right)
\frac{(\Phi_k^* \omega_{FS})^{n}}{n!}, \]
where $Z^j$ are homogeneous coordinates on $\mathbb{P}\mathcal{H}_k^\ast.$
Let $\underline{{M}}(\Phi_k)$ denote the trace-free part of $M(\Phi_k).$ 


\begin{lem}\label{seq_embed}
Consider $(M,\omega,J)$ a compact almost-K\"ahler manifold pre-quantized by a Hermitian complex line bundle $(L,h)$. Then, there is a sequence of embeddings $\Phi_k:M\longrightarrow
\mathbb{P}\mathcal{H}_k^\ast$ such that
\[ \Vert \underline{{M}}(\Phi_k)\Vert \leqslant \frac{k^{n/2-1}}{4\pi}\Vert
  s^\nabla - S^\nabla\Vert_{L^2} + O(k^{n/2-2}). \]
  Here $\Vert \underline{M}(\Phi_k)\Vert = \left(\mathrm{Tr}\left(\underline{M}(\Phi_k)\right)^2\right)^{1/2},$ $s^\nabla$ is the Hermitian scalar curvature of $(\omega, J)$
  and $ S^\nabla=\frac{\int_Ms^\nabla\omega^n}{\int_M\omega^n}$ is the normalized average of $s^\nabla.$
\end{lem}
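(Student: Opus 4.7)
\textbf{Proof plan for Lemma \ref{seq_embed}.}

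\emph{Step 1: Rewrite $M(\Phi_k)$ as a matrix of a Toeplitz-type operator.} In a local holomorphic frame, the Kodaira map is $\Phi_k(x)=[s_0(x):\cdots:s_{N_k}(x)]$, so $\Phi_k^\ast\!\left(\frac{Z^i\overline{Z}^j}{|Z|^2}\right)=\frac{(s_i,s_j)_{h^k}}{B_k}$. By Theorem \ref{approx}, $\tfrac{1}{k}\Phi_k^\ast\omega_{FS}=\omega+O(k^{-2})$ in $C^\infty$, so there is a smooth function $\alpha_k=1+O(k^{-2})$ with $(\Phi_k^\ast\omega_{FS})^n=k^n\alpha_k\,\omega^n$. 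Using (\ref{expansion}),
\[
\frac{\alpha_k}{B_k}=1-\frac{s^\nabla}{4\pi k}+O(k^{-2})\quad\text{in }C^\infty.
\]
Substituting into the definition of $M(\Phi_k)$ and recalling the inner product (\ref{inner}), we obtain in the orthonormal basis $\{s_i\}$,
\[
M(\Phi_k)=I_{\mathcal{H}_k}-\frac{1}{4\pi k}\,T_k^{s^\nabla}+E_k,
\]
where $T_k^f:=P_k M_f P_k|_{\mathcal{H}_k}$ is the Toeplitz operator with symbol $f$ and $E_k=T_k^{e_k}$ for a smooth symbol $e_k$ with $\|e_k\|_{C^0}=O(k^{-2})$.

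\emph{Step 2: Compute the trace-free part.} Since $\sum_i Z^i\overline{Z}^i/|Z|^2=1$, one has $\Tr M(\Phi_k)=\int_M (\Phi_k^\ast\omega_{FS})^n/n!=k^nV+O(k^{n-2})$, where $V=\int_M\omega^n/n!$. Combined with the expansion of $\dim\mathcal{H}_k$ stated in the introduction,
\[
\frac{\Tr M(\Phi_k)}{N_k+1}=\frac{k^nV+O(k^{n-2})}{k^nV+\frac{k^{n-1}}{4\pi}VS^\nabla+O(k^{n-2})}=1-\frac{S^\nabla}{4\pi k}+O(k^{-2}).
\]
Since $T_k^{\,c}=c\,I_{\mathcal{H}_k}$ for any constant $c$, subtracting the traced identity yields
\[
\underline{M}(\Phi_k)=-\frac{1}{4\pi k}\,T_k^{\,s^\nabla-S^\nabla}+E_k+O(k^{-2})\,I_{\mathcal{H}_k}.
\]

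\emph{Step 3: Bound each term in Frobenius norm.} The elementary inequality $\|T_k^f s_i\|_{L^2}\leqslant\|f s_i\|_{L^2}$ gives
\[
\Tr\bigl(T_k^{f}\bigr)^2\;\leqslant\;\sum_i\|fs_i\|_{L^2}^2\;=\;k^n\!\int_M f^2 B_k\,\frac{\omega^n}{n!}\;=\;k^n\|f\|_{L^2}^2\bigl(1+O(k^{-1})\bigr),
\]
using Theorem \ref{bergman_asym}. Applied to $f=s^\nabla-S^\nabla$, this yields $\|T_k^{\,s^\nabla-S^\nabla}\|_{\mathrm{Frob}}\leqslant k^{n/2}\|s^\nabla-S^\nabla\|_{L^2}+O(k^{n/2-1})$. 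Applied with $f=e_k$, and using $\|e_k\|_{C^0}=O(k^{-2})$, it gives $\|E_k\|_{\mathrm{Frob}}=O(k^{n/2-2})$. The scalar-multiple term satisfies $\|O(k^{-2})\,I_{\mathcal{H}_k}\|_{\mathrm{Frob}}=O(k^{-2})\sqrt{N_k+1}=O(k^{n/2-2})$. Combining via the triangle inequality produces the claimed estimate.

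\emph{Main obstacle.} The main difficulty is that the three error sources ($E_k$, the trace correction, and the Toeplitz inequality for the leading symbol) each a priori look like they might contaminate the leading constant $\frac{1}{4\pi}$. The key point is that the improved rate $O(k^{-2})$ in Theorem \ref{approx} (as opposed to $O(k^{-1})$ in Theorem \ref{embedding}) is exactly what forces $\|E_k\|_{\mathrm{Frob}}=O(k^{n/2-2})$; without it we would only get $O(k^{n/2-1})$, which would obstruct the optimal constant $\frac{1}{4\pi}$ in front of $\|s^\nabla-S^\nabla\|_{L^2}$. Care must also be taken when extracting $\sqrt{k^n\|f\|_{L^2}^2+O(k^{n-1})}$ to note that the Taylor expansion of the square root (rather than the crude $\sqrt{a+b}\leqslant\sqrt{a}+\sqrt{b}$) gives a correction of order $O(k^{n/2-1})$ rather than $O(k^{(n-1)/2})$, which after dividing by $4\pi k$ is what produces the claimed $O(k^{n/2-2})$ remainder.
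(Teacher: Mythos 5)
Your proof is correct and takes essentially the same route as the paper: the same two inputs (the improved $O(k^{-2})$ rate of Theorem~\ref{approx} and the Bergman expansion of Theorem~\ref{bergman_asym}) produce the same first-order formula for $M(\Phi_k)$ and its normalized trace, and your Frobenius bound $\Tr\bigl((T_k^f)^2\bigr)\leqslant\sum_i\Vert f s_i\Vert_{L^2}^2=k^n\Vert f\Vert_{L^2}^2\bigl(1+O(k^{-1})\bigr)$ is exactly the paper's Cauchy--Schwarz step in operator form. The only organizational difference is that the paper first diagonalizes $M(\Phi_k)$ by a unitary change of orthonormal basis and applies Cauchy--Schwarz entry by entry, whereas your Toeplitz packaging together with the contraction property of the projection $P_k$ avoids the diagonalization; both give the identical leading constant $\frac{1}{4\pi}$ and the $O(k^{n/2-2})$ remainder.
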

\begin{proof}
This is done as in the K\"ahler case. For the reader's convenience, we reproduce here the proof. 
We use the sequence of embeddings $\Phi_k$ defined by the orthonormal bases $\{s_0,\cdots,s_{N_k}\}$ of
$\mathcal{H}_k$. Using Theorem~\ref{approx}, we have that
  
\[ \begin{aligned}
         M(\Phi_k)_{ij} &= \int_{M}
         \Phi_k^*\left(\frac{Z^i\overline{Z}^j}{|Z|^2}\right)
         \ \frac{(\Phi_k^*\omega_{FS})^{n}}{n!} ,\\
         &= \int_M \frac{( s_i, s_j)_{h^k}}{B_k}
          \frac{ (k\omega)^{n}\left(1 + O(k^{-2})\right)}{n!}.
       \end{aligned} \]
We can assume that $M$ is
   diagonal. Then, using Theorem~\ref{bergman_asym}, we obtain 
\begin{equation}   \begin{aligned}\label{trace_part}
     M(\Phi_k)_{ii} &= k^{n}\int_M \frac{|s_i|^2_{h^k}}{B_k} 
      \frac{\omega^{n}\left(1 + O(k^{-2})\right)}{n!} , \\
     &= k^{n}\int_M |s_i|^2_{h^k} \left(1-\frac{s^\nabla}{4\pi}k^{-1}\right)
      \frac{\omega^{n}}{n!} + O(k^{n-2}),\\
      &=1-\frac{k^{-1}}{4\pi}\int_M |s_i|^2_{h^k}\,\, s^\nabla
      \frac{(k\omega)^{n}}{n!} + O(k^{n-2}).
   \end{aligned}\end{equation}
From Theorem~\ref{bergman_asym}, the dimension of $\mathcal{H}_k$ is given by
   \begin{eqnarray}\begin{aligned}\label{dim_exp}
   N_k + 1 
   &= k^n\int_M \frac{\omega^n}{n!} +\frac{ k^{n-1}}{4\pi}\int_Ms^\nabla\frac{\omega^n}{n!}+O(k^{n-2}). 
   \end{aligned}\end{eqnarray}
   It follows that
   \[ \begin{aligned}\sum_{i=0}^{N_k} M(\Phi_k)_{ii} &= N_k+1-\frac{k^{-1}}{4\pi}\int_M B_{k}\,\, s^\nabla
      \frac{(k\omega)^{n}}{n!} + O(k^{n-2}),\\
    &= N_k+1-\frac{k^{-1}}{4\pi}\int_M \,\, s^\nabla
      \frac{(k\omega)^{n}}{n!} + O(k^{n-2}),
  \end{aligned} \]
  
  Hence
  \[ \begin{aligned} \frac{\mathrm{Tr}(M(\Phi_k))}{N_k+1}=
  1-\frac{k^{-1}}{4\pi}{S^\nabla}+O(k^{-2}).
  \end{aligned} \]
  
  Combined with~(\ref{trace_part}), the trace free part $\underline{M}(\Phi_k)$ of $M(\Phi_k)$ is
   \[ \begin{aligned}\underline{M}(\Phi_k)_{ii}=-\frac{k^{-1}}{4\pi}\int_M 
   |s_i|^2_{h^k}(s^\nabla-S^\nabla)\frac{(k\omega)^n}{n!}+O(k^{-2}).
   \end{aligned} \]
  By the Cauchy--Schwarz inequality, we have
   \[ \begin{aligned}|\underline{M}(\Phi_k)_{ii}|^2&\leqslant\frac{k^{-2}}{16\pi^2}\int_M 
   |s_i|^2_{h^k}\frac{(k\omega)^n}{n!}\int_M|s_i|^2_{h^k}(s^\nabla-S^\nabla)^2\frac{(k\omega)^n}{n!}+O(k^{-3}),\\
   &=\frac{k^{-2}}{16\pi^2}\int_M|s_i|^2_{h^k}(s^\nabla-S^\nabla)^2\frac{(k\omega)^n}{n!}+O(k^{-3}).
   \end{aligned} \]
  Taking the sum, we obtain that
   \[ \begin{aligned}\Vert\underline{M}(\Phi_k)\Vert^2&\leqslant\frac{k^{-2}}{16\pi^2} 
   \int_MB_k(s^\nabla-S^\nabla)^2\frac{(k\omega)^n}{n!}+O(k^{n-3}),\\
   &=\frac{k^{n-2}}{16\pi^2}\int_M(s^\nabla-S^\nabla)^2\frac{\omega^n}{n!}+O(k^{n-3}).
   \end{aligned} \]
  The Lemma follows.

\end{proof}

Our aim now is to find a lower bound for $\Vert\underline{M}(\Phi_k)\Vert.$
First, we fix a compact group $G$ in $Ham(M,\omega)$. 
We consider a $G$-invariant $\omega$-compatible almost-complex structure $J.$
We choose a $S^1$-action $\Gamma$ on $(M,\omega)$ generated by a Hamiltonian vector field in $Lie(G).$ 
The $S^1$-action can be lifted to an action on $L^k$ (preserving $h^k$ and $\nabla^{L^k}$) (for any $k\geqslant 1$).
This induces a linear action of $S^1$ on smooth sections of $L^k$. Furthermore, since the $S^1$-action preserves the induced metric by $J$, the induced action maps
$\mathcal{H}_k$ to itself. We denote by $-\sqrt{-1}A_k$ the
 infinitesimal generator of the linearized $S^1$-action $\Gamma$ on $\mathcal{H}_k$ with $A_k$ having integral entries.

For large $k>0$, let $\Phi_{k}:M\longrightarrow
\mathbb{P}\mathcal{H}_k^\ast$ be an embedding of $M$ using an orthonormal bases $\{s_0,\cdots,s_{N_k}\}$ of $\mathcal{H}_k$.
Let 
$\chi_{\Gamma}:\mathbf{C}^*\hookrightarrow GL(N_k+1)$ be a one-parameter
subgroup, such that $\chi_{\Gamma}(S^1)\subset U(N_k+1)$ satisfying $\chi_{\Gamma}(t)=t^{A_k}$ (normalized so that $\chi_{\Gamma}(1)$ is the identity map).
By definition, $\chi_{\Gamma}(S^1)$ preserves both the Fubini-Study form $\omega_{FS}$ and $g_{FS}$ on $\mathbb{P}\mathcal{H}_k^\ast.$
A Hamiltonian function (with respect to $\omega_{FS}$) for the corresponding $S^1$-action
is given by
\[
\h_{A_k} = \frac{-\sum_{i,j}\left(A_k\right)_{ij}Z^i\overline{Z}^j}{|Z|^2}.
 \]
 so that
 \begin{equation}\label{Phikast}
\Phi_k^\ast(\h_{A_k}) = \frac{-\sum_{i,j}\left(A_k\right)_{ij}(s_i,s_j)_{h^k}}{B_k}.
\end{equation}


%

Now, let $\Phi_k^t = \chi_{\Gamma}(t)\circ \Phi_k$ and define the function
\[ f(t) =- \mathrm{Tr}(\underline{{A_k}{M}}(\Phi_k^t)) =-
\mathrm{Tr}\left(\underline{A_k}M(\Phi_k^t)\right), \]
where $\underline{A_k}$ is the trace-free part of $A_k$. Then
\[ \label{eq:ft}
f(t) = \int_M \Phi_k^{t*}(\h_{A_k})\,\frac{
  (\Phi_k^{t*}\omega_{FS})^{n}}{n!} +
\frac{\mathrm{Tr}(A_k)}{N_k+1} \int_M \frac{
(\Phi_k^{t*}\omega_{FS})^{n}}{n!}. \]
A calculation shows that for real numbers $t>0$ we
have $f'(t)\geqslant 0$.
\begin{lem}
  With the above definition, one has $\forall t>0$, $$f'(t)\geqslant 0.$$
\end{lem}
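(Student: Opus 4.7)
My plan is to differentiate $f(t)$ by hand, obtaining an integral formula whose integrand is manifestly non-negative. Writing $v(x,t)=\chi_\Gamma(t)\cdot\Phi_k(x)$ as a representative in $\mathbb{C}^{N_k+1}$ of $\Phi_k^t(x)$ and $p(x,t)=v(x,t)\,v(x,t)^*/|v(x,t)|^2$ for the associated pointwise rank-one projection, one has
$$M(\Phi_k^t)=\int_M p(x,t)\,\frac{\Omega_t^n(x)}{n!},\qquad \Omega_t=(\Phi_k^t)^*\omega_{FS}.$$
Using $\chi_\Gamma(t)=t^{A_k}$ and hence $t\,\partial_t v=A_k\,v$, a direct calculation gives
$$t\,\partial_t p=A_k\,p+p\,A_k-2\,\Tr(A_k p)\,p,\qquad t\,\partial_t \Omega_t=(\Phi_k^t)^*L_{\xi_{A_k}}\omega_{FS},$$
where $\xi_{A_k}$ is the real vector field on $\mathbb{P}\mathcal{H}_k^*$ generating the one-parameter subgroup $u\mapsto\chi_\Gamma(e^u)$.

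Taking the trace against $\underline{A_k}$ in $f(t)=-\Tr(\underline{A_k}\,M(\Phi_k^t))$ produces two contributions. The first, coming from $\partial_t p$, is controlled by the algebraic identity
$$\Tr\!\bigl(\underline{A_k}(A_k p+pA_k-2\Tr(A_k p)\,p)\bigr)=2\bigl(\Tr(\underline{A_k}^2 p)-\Tr(\underline{A_k}\,p)^2\bigr),$$
which is twice the variance of the eigenvalues of $\underline{A_k}$ against the probability distribution $p_j=|\langle v,u_j\rangle|^2/|v|^2$ determined by $p$ — hence pointwise non-negative. A short computation on the Kähler ambient identifies this variance with $\tfrac12|\nabla\h_{\underline{A_k}}|^2_{g_{FS}}$ at $\Phi_k^t(x)$.

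For the second contribution, since $\mathbb{P}\mathcal{H}_k^*$ is Kähler and $\xi_{A_k}$ is (up to sign) the Riemannian gradient of $\h_{A_k}$ for $g_{FS}$, one has $L_{\xi_{A_k}}\omega_{FS}=\ddc\h_{A_k}=\ddc\h_{\underline{A_k}}$. Changing variables to the moving submanifold $M_t=\Phi_k^t(M)$ and applying Stokes on the closed $M_t$ (using that the pullback $\omega_{FS}^{n-1}$ is closed), the $\ddc$-term integrates by parts to $-\int_{M_t}d\h_{\underline{A_k}}\wedge d^c\h_{\underline{A_k}}\wedge\omega_{FS}^{n-1}/(n-1)!$. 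Combining this with the variance contribution, and invoking the pointwise Kähler identity $n\,d\h\wedge d^c\h\wedge\omega_{FS}^{n-1}=|\nabla^T\h|^2\omega_{FS}^n$ along the tangential directions to $M_t$, the two pieces reassemble into
$$f'(t)=\frac{1}{t}\int_{M_t}|\nabla^N\h_{\underline{A_k}}|^2_{g_{FS}}\,\frac{\omega_{FS}^n}{n!}\geq 0,$$
where $\nabla^N\h_{\underline{A_k}}$ is the component of the ambient gradient normal to $M_t$.

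The main obstacle lies in this final reassembly: since $\Phi_k$ is only nearly holomorphic (Theorem~\ref{embedding}), the image $M_t$ is symplectic but need not be a complex submanifold of the Kähler ambient, so the pointwise Kähler identity above must be handled with care. The cleanest strategy is to retain the variance identity of the second paragraph as the exact, pointwise source of positivity — it holds verbatim on any $p=vv^*/|v|^2$ — and to combine it directly with the boundary contribution from Stokes, showing that the resulting integrand is manifestly non-negative at each point of $M_t$ regardless of whether $M_t$ is complex or only almost-complex in $\mathbb{P}\mathcal{H}_k^*$. This yields the desired inequality $f'(t)\geq 0$ for all $t>0$.
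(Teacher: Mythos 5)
Your argument is essentially the paper's own proof: both differentiate $f$ along the flow generated by $\mathrm{grad}\,\h_{A_k}$, split the derivative into the moment-map term (your variance $2\bigl(\Tr(\underline{A_k}^2p)-\Tr(\underline{A_k}\,p)^2\bigr)$, which is exactly the paper's $|\mathrm{grad}\,\h_{A_k}|^2$ term) and the volume-variation term (rewritten via $\mathfrak{L}_{\mathrm{grad}\,\h_{A_k}}\omega_{FS}=dd^c\h_{A_k}$ and Stokes as the tangential norm $|d\h_{A_k}|_M^2$), and conclude that $f'(t)$ is the integral over $\Phi_k^t(M)$ of the squared normal component of the gradient. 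The one caveat you flag --- that $\Phi_k^t(M)$ is only symplectic, not complex, so the pointwise identity $-d\h_{A_k}\wedge d^c\h_{A_k}\wedge\omega_{FS}^{n-1}=\tfrac1n|d\h_{A_k}|_M^2\,\omega_{FS}^n$ requires care --- is genuine but is used in exactly the same way, without further comment, in the paper's proof, so your proposal is no less complete than the published argument.
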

\begin{proof}
  We consider the one-parameter
group of diffeomorphisms generated by the vector field
$-{\mathrm{grad}}\,\h_{A_k}$ so we are
approaching $0$ along the positive real axis in $\mathbb{C}^\ast$.
Then, we have the following derivative at $s=0$
\begin{equation}\label{compute_ft}\begin{aligned}
\left. \frac{d}{ds}\right |_{s=0}\int_{M}  \Phi_k^{s*}(\h_{A_k})\,\frac{
  (\Phi_k^{s*}\omega_{FS})^{n}}{n!}=&-\int_{\Phi_k(M)} |{\mathrm{grad}}\,\h_{A_k}|^2\,\frac{
  \omega_{FS}^{n}}{n!}\\
  &+\int_{\Phi_k(M)} \h_{A_k}\,\frac{
  \mathfrak{L}_{-{\mathrm{grad}}\,\h_{A_k}}\omega_{FS}\wedge\omega_{FS}^{n-1}}{(n-1)!}.
\end{aligned}
\end{equation}
The second term in the r.h.s of (\ref{compute_ft}) can be written as
\[ \begin{aligned}
\int_{\Phi_k(M)} \h_{A_k}\,
  \mathfrak{L}_{-{\mathrm{grad}}\,\h_{A_k}}\omega_{FS}\wedge\omega_{FS}^{n-1}&=-  \int_{\Phi_k(M)} d \h_{A_k}\wedge d^c \h_{A_k}
 \wedge\omega_{FS}^{n-1},\\
 &=\frac{1}{n}\int_{\Phi_k(M)} |{}d \h_{A_k}|_M^2\,\omega_{FS}^{n},
\end{aligned}
\]
where $ |{}d \h_{A_k}|_M^2=|{\mathrm{grad}}\,\h_{A_k}|_M^2$ is the norm of the tangential part to $\Phi_k(M)$. We deduce
\[ \begin{aligned}
\left. \frac{d}{ds}\right |_{s=0}\int_{M}  \Phi_k^{s*}(\h_{A_k})\,\frac{
  (\Phi_k^{s*}\omega_{FS})^{n}}{n!}&=-\int_{\Phi_k(M)} |{\mathrm{grad}}\,\h_{A_k}|_N^2\,\frac{
  \omega_{FS}^{n}}{n!},
\end{aligned}
\] 
where $|{\mathrm{grad}}\,\h_{A_k}|_N^2$ is the norm of the normal component. 
On the other hand
\[ \begin{aligned}
\left. \frac{d}{ds}\right |_{s=0}\int_{M}  \frac{
  (\Phi_k^{s*}\omega_{FS})^{n}}{n!}&=0
  \end{aligned}
\]  

Increasing $t$
corresponds to flowing along ${\mathrm{grad}}\,\h_{A_k}.$ We deduce that $f'(t)\geqslant 0$
for real numbers $t>0$.
\end{proof}
 
Now it follows that
\[ -\mathrm{Tr}(\underline{A_k\,M}(\Phi_k)) = f(1) \geqslant
\lim_{t\to 0} f(t), \]
and so by the Cauchy--Schwarz inequality
\begin{equation} \label{cauchy_schwarz}\Vert\underline{A_k}\Vert\, \Vert\underline{M}(\Phi_k)\Vert \geqslant
\lim_{t\to 0}f(t). \end{equation}
In particular if $\lim_{t\to 0} f(t) > 0$, then we get a positive lower bound
on $\Vert \underline{M}(\Phi_k)\Vert$.

\hfill

Suppose now that {\it{the limit $M_0=\lim_{t\to 0} \Phi_k^t(M)$ exists as a symplectic variety with singular locus of complex dimension less than $n-1$}}.
We have then
\begin{equation}\label{eq:lim_ft}
\lim_{t\to 0}f(t) = \int_{M_0} \h_{A_k}\,\frac{
  \omega_{FS}^{n}}{n!} +
\frac{\mathrm{Tr}(A_k)}{N_k+1} \int_{M_0} \frac{
 \omega_{FS}^{n}}{n!}.\end{equation}
  
It follows from Theorem~\ref{approx} that  one can choose a Hamiltonian $\h$ with respect to
$\omega$ such that 
\begin{equation}\label{approx_sympl}
\frac{1}{k}\Phi^\ast_k \left(\h_{A_k}\right)-\h=O(k^{-2}).
\end{equation}
Then
\[\begin{aligned}
\int_{M}hB_k\frac{\omega^n}{n!}&=\frac{1}{k}\int_{M}\Phi_k^\ast(\h_{A_k})B_k\frac{\omega^n}{n!}+O(k^{-2}),\\
&=-\frac{1}{k^{n+1}}\sum_{i,j}{\left(A_k\right)_{ij}}\int_{M}(s_i,s_j)_{h^k}\frac{(k\omega)^n}{n!}+O(k^{-2}),\\
&=-\frac{1}{k^{n+1}}\mathrm{Tr}(A_k)+O(k^{-2}).
\end{aligned}
\] 
It follows from Theorem~\ref{bergman_asym} that 
\begin{eqnarray}\begin{aligned}\label{A_exp} 
\mathrm{Tr}(A_k)&=b_0k^{n+1}+b_1k^n+O(k^{n-1}),\\
&=-k^{n+1}\int_{M}\h\,\frac{\omega^n}{n!}-\frac{k^n}{4\pi}\int_{M}\h s^\nabla\frac{\omega^n}{n!}+O(k^{n-1}).
\end{aligned}
\end{eqnarray} 
One has also from~(\ref{approx_sympl})
\begin{equation}\label{first_term}
\int_{M_0} \h_{A_k}\,\frac{\omega_{FS}^{n}}{n!}= -b_0k^{n+1}+O(k^{n-1}).
 \end{equation}

Then, from~(\ref{cauchy_schwarz}),~(\ref{eq:lim_ft}),~(\ref{dim_exp}),~(\ref{A_exp}) and~(\ref{first_term}), we deduce
\[ \begin{aligned}\Vert\underline{A_k}\Vert\, \Vert\underline{M}(\Phi_k)\Vert \geqslant &-b_0k^{n+1}+\frac{b_0k^{n+1}+b_1k^n+O(k^{n-1})}{a_0k^{n}+a_1k^{n-1}+O(k^{n-2})}a_0k^n+O(k^{n-1}),\\
=&-b_0k^{n+1}\\
&+\left(b_0k^{n+1}+b_1k^n+O(k^{n-1})\right)\left(1-\frac{a_1}{a_0}k^{-1}+O(k^{-2})\right)\\
&+O(k^{n-1}),\\
=&k^n\left(b_1-\frac{a_1}{a_0}b_0\right)+O(k^{n-1}).
\end{aligned}\]

It follows then from Lemma~\ref{seq_embed} that
\begin{align}\label{final_ineq}\Vert\underline{A_k}\Vert\left(\frac{k^{n/2-1}}{4\pi}\Vert
  s^\nabla - S^\nabla\Vert_{L^2} + O(k^{n/2-2})\right)\geqslant &   k^n\left(b_1-\frac{a_1}{a_0}b_0\right)\\
  &+O(k^{n-1}).\nonumber
  \end{align}

\bigskip
Now, we need to compute the asymptotic expansion for $\Vert{A_k}\Vert^2=\Tr(A_k^2).$ Let us denote $\nu=\omega^n/n!$ and consider $P_{\nu,k}$ the smooth kernel of the $L^2$-orthogonal projection from $C^\infty(M,L^k)$ to $\mathcal{H}_k$. Set $$K_k(x,y)=\vert P_{\nu,k}(x,y)\vert^2_{h^k\otimes (h^k)^*},$$
where $x,y\in M$. We can write
$$K_{k}(x,y)={k^{n}}\sum_{i,j=1} (s_i(x),s_j(x))_{h^k}(s_j(y),s_i(y) )_{h^k},$$
for $\{s_i\}$ an $L^2$-orthonormal basis with respect to the inner product \eqref{inner}. 
We consider the integral operator associated to $K_k$ which is defined for any $f\in C^{\infty}(M)$ as 
$$Q_{K_k}(f)(x)=\int_{X}K_{k}(x,y)f(y) \frac{\omega^n_y}{n!}.$$
The $Q$-operator has been studied by S. Donaldson \cite{Don-4}, K. Liu-- X. Ma \cite{LM} and X. Ma-- G. Marinescu \cite{Ma-Mar3} in the context of  K\"ahler compact manifolds. They provided an asymptotic result for this operator. We quote a generalization of this result obtained by W. Lu-- X. Ma-- G. Marinescu to the context of pre-quantized symplectic compact manifolds.

\begin{thm-known}[\cite{Lu-Ma-Mar2}]\label{Qkthm}
For any integer $m\geq 0$, there exists a constant $c>0$ such that for any $f\in C^{\infty}(M)$, 
\begin{align}\label{asymptQ2}\Big\Vert Q_{K_k}(f) - f\Big\Vert_{C^m} \leq \frac{c}{k}\Vert f \Vert_{C^{m+2}}.\end{align}
Moreover, \eqref{asymptQ2} is uniform in the sense that there is an integer $s_0$ such that if the hermitian metric $h$ on $L$ varies in a bounded set in $C^{s_0}$ topology then the constant $c$ is independent of $h$.
\end{thm-known}

\begin{lem}\label{tr2}
 With notations as above, $$\Tr(A_k^2)=k^{n+2}\int_M \h^2\,  \frac{\omega^n}{n!}+O(k^{n+1}),$$
 where $\h$ is  a hamiltonian defined by $\omega$.
\end{lem}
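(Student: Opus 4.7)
The strategy is to compare $A_k$ with the Toeplitz operator $T_{-k\h}:=P_{\nu,k}\circ M_{-k\h}\circ P_{\nu,k}$ acting on $\mathcal{H}_k$, where $P_{\nu,k}$ is the $L^2$-orthogonal projection (for the inner product \eqref{inner}) and $M_{-k\h}$ is multiplication by $-k\h$; the sign convention is fixed by matching the leading asymptotic \eqref{A_exp} of $\Tr(A_k)$ with $\Tr(T_{-k\h})$. Using the explicit form of $K_k$ given in the excerpt and its symmetry $K_k(x,y)=K_k(y,x)$, a short computation with the matrix entries $(T_f)_{ji}=\int_M f\,(s_i,s_j)_{h^k}\frac{(k\omega)^n}{n!}$ produces the identity
\[\Tr(T_fT_g)=k^n\int_M g(y)\,Q_{K_k}(f)(y)\,\frac{\omega^n_y}{n!}.\]
Applying Theorem~\ref{Qkthm} to $f=g=-k\h$, for which $\|{-k\h}\|_{C^{m+2}}=O(k)$ and hence $Q_{K_k}(-k\h)=-k\h+O(1)$, gives
\[\Tr(T_{-k\h}^2)=k^{n+2}\int_M\h^2\,\frac{\omega^n}{n!}+O(k^{n+1}).\]

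Next, combining \eqref{Phikast}, \eqref{approx_sympl} and $B_k=1+O(k^{-1})$ gives in $C^\infty$-norm the key identification
\[\sum_{ij}(A_k)_{ij}(s_i,s_j)_{h^k}=-\Phi_k^\ast(\h_{A_k})\,B_k=-k\h+O(1).\]
Using cyclicity of the trace and the matrix entries of $T_{-k\h}$ above, the cross-trace is computed directly:
\[\Tr(A_k\,T_{-k\h})=\int_M(-k\h)\sum_{ij}(A_k)_{ij}(s_i,s_j)_{h^k}\,\frac{(k\omega)^n}{n!}=k^{n+2}\int_M\h^2\,\frac{\omega^n}{n!}+O(k^{n+1}).\]

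Setting $E_k:=A_k-T_{-k\h}$, one has $\Tr(A_k^2)=\Tr(A_kT_{-k\h})+\Tr(A_kE_k)$, and I control the remainder by Cauchy--Schwarz: $|\Tr(A_kE_k)|\leq\|A_k\|_{\mathrm{HS}}\|E_k\|_{\mathrm{HS}}$. A crude bound $\|A_k\|_{\mathrm{op}}=O(k)$ follows since the eigenvalues of $A_k$ are weights of the lifted $S^1$-action on $\mathcal{H}_k$, asymptotically bounded by $k\sup_M|\h|$; this yields $\|A_k\|_{\mathrm{HS}}=O(k^{(n+2)/2})$. Both $\sum_{ij}(A_k)_{ij}(s_i,s_j)_{h^k}=-k\h+O(1)$ (shown above) and $\sum_{ij}(T_{-k\h})_{ij}(s_i,s_j)_{h^k}=Q_{K_k}(-k\h)=-k\h+O(1)$ (by the same kernel identity together with Theorem~\ref{Qkthm}), so their difference, the \emph{Berezin symbol} of $E_k$, is $O(1)$ in $C^\infty$. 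The almost-K\"ahler Berezin--Toeplitz operator-norm estimate of Ma--Marinescu then gives $\|E_k\|_{\mathrm{op}}=O(1)$, hence $\|E_k\|_{\mathrm{HS}}=O(k^{n/2})$ and $|\Tr(A_kE_k)|=O(k^{n+1})$, which closes the argument. The principal obstacle will be this final operator-norm bound, which goes beyond Theorem~\ref{Qkthm} alone and requires the full almost-K\"ahler Berezin--Toeplitz quantization machinery of Ma--Marinescu.
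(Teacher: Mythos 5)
Your overall strategy is the same as the paper's: compare $A_k$ with a Toeplitz-type operator whose symbol is $-k\h$ and control the discrepancy using the Lu--Ma--Marinescu $Q$-operator (Theorem~\ref{Qkthm}) together with \eqref{Phikast}, \eqref{approx_sympl} and the Bergman expansion. Your bookkeeping is different and in one respect cleaner: the exact identity $\mathrm{Tr}(C\,T_f)=\int_M f\,\iota(C)\,\frac{(k\omega)^n}{n!}$ (with $\iota(C)=\sum_{ij}C_{ij}(s_i,s_j)_{h^k}$) lets you compute the cross-trace $\mathrm{Tr}(A_kT_{-k\h})$ rigorously from the covariant symbol $\iota(A_k)=-k\h+O(1)$ alone, whereas the paper works with the matrix $\tilde A$ (the Toeplitz operator with symbol $\Phi_k^*(\h_{A_k})$), asserts $\tilde A=-A_k(\mathrm{Id}+O(1/k))$, and evaluates $\mathrm{Tr}(\tilde A^2)$ by the same kernel identity you use for $\mathrm{Tr}(T_fT_g)$.

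The genuine gap is your final step $\Vert E_k\Vert_{\mathrm{op}}=O(1)$. The Berezin--Toeplitz operator-norm estimate bounds $\Vert T_f\Vert_{\mathrm{op}}$ by the sup-norm of the \emph{contravariant} symbol $f$; what you have established for $E_k=A_k-T_{-k\h}$ is only that its \emph{covariant} (Berezin) symbol $\iota(E_k)$ is $O(1)$, and no uniform bound $\Vert C\Vert_{\mathrm{op}}\leqslant c\,\Vert\iota(C)\Vert_{C^0}$ holds: the symbol map $\iota$ has an inverse whose norm blows up with $k$ (e.g.\ a diagonal matrix with alternating signs has operator norm $1$ while $\iota$ of it can be made pointwise tiny by cancellation among the $|s_i|^2$). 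So the cited estimate does not apply to $E_k$, and the remainder $\mathrm{Tr}(A_kE_k)$ is not controlled as written. The paper closes exactly this hole by reading Theorem~\ref{Qkthm} at the operator level: since $\iota\circ Q=Q_{K_k}\circ\iota$, the theorem is interpreted as the matrix statement $Q(A_k)=A_k(\mathrm{Id}+O(1/k))$, i.e.\ $A_k$ \emph{is} (up to controlled error) the Toeplitz operator with contravariant symbol $\iota(A_k)=-k\h+O(1)$; granting that, the contravariant symbol of $E_k$ is $O(1)$ and your operator-norm bound follows. You should reroute your error estimate through this form of Theorem~\ref{Qkthm} rather than through the covariant symbol of $E_k$. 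A secondary, smaller point: the bound $\Vert A_k\Vert_{\mathrm{op}}=O(k)$ via ``weights bounded by $k\sup_M|\h|$'' is standard in the K\"ahler case but needs justification for the almost-K\"ahler spaces $\mathcal{H}_k$ (eigensections of the renormalized Laplacian are not holomorphic, so the usual localization of weight vectors near fixed points must be re-derived); it too follows once $A_k=-T_{k\h}+O(1)$ is in hand, but then it cannot be used as an independent input.
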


\begin{proof}
 Let us write
  \begin{equation}\label{tA}\tilde{A}_{ij}=k^n\int_M (s_i, \Phi^*_k(\h_{A_k}) s_j )_{h^k} \frac{\omega^n}{n!},
   \end{equation}
 where $\Phi^*_k(\h_{A_k})$ is given by \eqref{Phikast} and $\{s_i\}$ is a fixed $L^2$-orthonormal basis of holomorphic sections with respect to the inner product \eqref{inner}.
Now, set
 \begin{align*}Q(A_k)_{ij}& ={k^{n}}\int_M (s_i,  \sum_{p,q} (A_k)_{pq} (s_p,s_q)_{h^k}   s_j )_{h^k} \frac{\omega^n}{n!},\\
  & ={k^{n}} \int_M   \sum_{p,q} (A_k)_{pq} (s_p,s_q)_{h^k}(s_i,  s_j)_{h^k} \frac{\omega^n}{n!}.
   \end{align*}
With the map $\iota:Met(\mathcal{H}_k)\rightarrow C^\infty(M)$ given by
$$\iota(A_{ij})=\sum_{i,j} A_{ij}(s_i,s_j)_{h^k},$$
one can write $\iota \circ Q(A_k)=Q_{K_k}\circ \iota(A_k)$. The map $\iota$ is linear and invertible on its image. From Theorem \ref{Qkthm}, we have 
\begin{equation} Q(A_k)=A_k(Id+O(1/k)).\label{eqQA}\end{equation}
The Bergman function has a uniform asymptotic expansion as stated in Theorem \ref{bergman_asym}. From  the higher order term of this expansion, we can deduce using \eqref{tA}, \eqref{Phikast} and \eqref{eqQA}   
that $$\tilde{A}_{ij}=-Q(A_k)(Id+O(1/k))=-A_k(Id+O(1/k)).$$
Consequently, 
$$\tr(A_k^2)=\tr(\tilde{A}^2)(1+O(1/k)).$$
Now, let us compute $\tr(\tilde{A}^2)$. By a direct computation, we have 
\begin{align*}\tr(&\tilde{A}^2) \\
    &= k^{2n}\int_{M \times M}\sum_{i,j} \left(s_{i}(x),\Phi^*_k(\h_{A_k})(x)s_{j}(x) \right) \left(s_{j}(y),\Phi^*_k(\h_{A_k})(y)s_{i}(y) \right) \omega_x^n\omega_y^n, \\
    &= k^{n}\int_M \tr(Q_{K_k}(\Phi^*_k(\h_{A_k}))\Phi^*_k(\h_{A_k})) \omega^n, \\
            &= k^{n+2}\int_M \tr\left(Q_{K_k}\left(\frac{1}{k}\Phi^*_k(\h_{A_k})\right)\frac{1}{k}\Phi^*_k(\h_{A_k})\right) \omega^n.
            \end{align*}
We have $Q(\frac{1}{k}\Phi^*_k(\h_{A_k})) = \frac{1}{k}\Phi^*_k(\h_{A_k})(1+ O(\frac{1}{k}))$ from Theorem \ref{Qkthm} and also $\frac{1}{k}\Phi^*_k(\h_{A_k})=\h(1+O(\frac{1}{k}))$ from \eqref{approx_sympl}. Combining all previous results, we obtain the asymptotic of $\Tr(A_k^2)$. 
\end{proof}
Let us write $\mathrm{Tr}(\underline{A_k^2})$ as
$$
\mathrm{Tr}(\underline{A_k^2})=\Vert\chi_{\Gamma}\Vert^2k^{n+2}+O(k^{n+1}).$$	
Then, the expression of $\Vert\chi_{\Gamma}\Vert$ is given by the following result.
\begin{cor}\label{lastcor}
 With notations as above $$\Vert \chi_\Gamma\Vert^2=\int_M (\h-\widehat{\h})^2\frac{\omega^n }{n!},$$
 with $\widehat{\h}$ the normalized average of $\h$.
\end{cor}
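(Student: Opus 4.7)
The goal is to extract the leading coefficient in $\Tr(\underline{A_k^2})$. Since $\underline{A_k}$ is by definition $A_k - \frac{\Tr(A_k)}{N_k+1}\mathrm{Id}$, the first step is the elementary identity
\[
\Tr(\underline{A_k^2}) = \Tr(A_k^2) - \frac{\bigl(\Tr(A_k)\bigr)^2}{N_k+1}.
\]
All three quantities on the right-hand side already come with known asymptotic expansions in the paper: $\Tr(A_k^2)$ is given by Lemma~\ref{tr2}, $\Tr(A_k)$ by the expansion (\ref{A_exp}), and $N_k+1$ by (\ref{dim_exp}). So the proof is just a matter of combining these.

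The plan is to compute the leading term of the subtracted piece. From (\ref{A_exp}), $\Tr(A_k) = -k^{n+1}\int_M \h\,\nu + O(k^n)$ (writing $\nu = \omega^n/n!$), so
\[
\bigl(\Tr(A_k)\bigr)^2 = k^{2n+2}\Bigl(\int_M \h\,\nu\Bigr)^2 + O(k^{2n+1}).
\]
Dividing by $N_k+1 = k^n \int_M \nu + O(k^{n-1})$ and expanding geometrically, I get
\[
\frac{\bigl(\Tr(A_k)\bigr)^2}{N_k+1} = k^{n+2}\,\frac{\bigl(\int_M \h\,\nu\bigr)^2}{\int_M \nu} + O(k^{n+1}).
\]
Combining with Lemma~\ref{tr2} then yields
\[
\Tr(\underline{A_k^2}) = k^{n+2}\Bigl(\int_M \h^2\,\nu - \frac{(\int_M \h\,\nu)^2}{\int_M \nu}\Bigr) + O(k^{n+1}).
\]

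The last step is to recognize the expression in the parenthesis as $\int_M(\h-\widehat{\h})^2\,\nu$. Indeed, since $\widehat{\h}$ is the constant $\frac{\int_M \h\,\nu}{\int_M \nu}$, expanding the square gives $\int_M(\h-\widehat{\h})^2\,\nu = \int_M \h^2\,\nu - 2\widehat{\h}\int_M \h\,\nu + \widehat{\h}^2\int_M \nu$, and the last two terms collapse to $-\widehat{\h}^2\int_M \nu = -(\int_M \h\,\nu)^2/\int_M \nu$. Matching this with the defining relation $\Tr(\underline{A_k^2}) = \Vert\chi_\Gamma\Vert^2 k^{n+2} + O(k^{n+1})$ gives the claimed identity.

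There is really no hard step here: everything is bookkeeping on asymptotic expansions already established. The only point to watch is that the expansion of $\Tr(A_k)$ is needed only to leading order to read off the $k^{n+2}$ coefficient of $(\Tr A_k)^2/(N_k+1)$, so the subleading term $b_1$ (which would involve $\int_M \h\, s^\nabla\,\nu$) does not contribute at this order; similarly the subleading terms of Lemma~\ref{tr2} are absorbed into $O(k^{n+1})$.
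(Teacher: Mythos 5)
Your argument is correct and is exactly the (implicit) one the paper relies on: the corollary is stated without proof precisely because it follows from the identity $\Tr(\underline{A_k}^2)=\Tr(A_k^2)-(\Tr A_k)^2/(N_k+1)$ together with Lemma~\ref{tr2}, the leading term of \eqref{A_exp}, and \eqref{dim_exp}, followed by the variance identity $\int_M(\h-\widehat{\h})^2\nu=\int_M\h^2\nu-(\int_M\h\,\nu)^2/\int_M\nu$. All the bookkeeping in your write-up checks out, including the observation that the subleading coefficients $a_1$ and $b_1$ do not affect the $k^{n+2}$ term.
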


\begin{proof}[Proof of Theorem \ref{main_thm}]
The proof is now obtained by combining Lemma~\ref{tr2} and ~(\ref{final_ineq}) and letting $k\to\infty$.
\end{proof}

\begin{proof}[Proof of Corollary \ref{corollaire2}]
 We know from Narasimhan and Seshadri that if $E$ is polystable then $\mathbb{P}(E)$ admits a cscK metric (in any K\"ahler class) and thus a cHscaK metric, see \cite{Apo-Cal-Gau-Fri} for details. Now, assume that we have a symplectic form such that $C^{S^1}_\omega\neq\emptyset $ i.e there is an ${S^1}$-invariant integrable compatible almost-complex structure $J$. If $E=E_1\oplus ...\oplus E_s$ is not polystable and $F$ is a destabilizing subbundle of one component of $E$, say $E_1$, one can consider the test configuration  associated to the deformation to the normal cone of $\mathbb{P}(F\oplus E_2...\oplus E_s)$ whose central fibre is $\mathbb{P}(F \oplus E_1/F\oplus E_2\oplus ..\oplus E_s)$ and in particular is smooth. This test configuration admits a $\mathbb{C}^*$ action that covers the usual action on the base $\mathbb{C}$ and whose restriction to $F\oplus E_1/F\oplus E_2\oplus ..\oplus E_s$ scales the fibers of $F$ with weight 1 and acts trivially on the other components. Seeing $(\mathbb{P}(E),\omega,J)$ as a K\"ahler manifold, the computations of \cite[Section 5]{Ross-Thomas} (see also \cite{DVZ}) show that the Futaki invariant of this test configuration is negative. Actually, the Futaki invariant is a positive multiple of the difference of the slopes $\mu(E_1)-\mu(F)<0$. Then, we apply  Corollary \ref{corollaire} to deduce the non existence of cHscaK structure in $ AK_\omega^{S^1}$.  In the case of $\mathrm{rk}(E)= 2$, any symplectic rational ruled surface admits a compatible integrable complex structure, see \cite{AGK} and references therein. Note that for the general case, it is unclear whether we can drop the assumption on $C_\omega$ as there exist projective manifolds with symplectic forms $\omega$ such that $C_\omega=\emptyset$, see for instance \cite{CP}.
\end{proof}

\providecommand{\bysame}{\leavevmode\hbox to3em{\hrulefill}\thinspace}
\providecommand{\MR}{\relax\ifhmode\unskip\space\fi MR }
\providecommand{\MRhref}[2]{%
  \href{http://www.ams.org/mathscinet-getitem?mr=#1}{#2}
}
\providecommand{\href}[2]{#2}


\begin{thebibliography}{10}

\bibitem{AGK} M. Abreu, G. Granja  and N. Kitchloo, \emph{Compatible complex structures on symplectic rational ruled
              surfaces}, Duke Math. J. {\bf 148} (2009), no. 3,  539--600.


\bibitem{Apo-Cal-Gau-Fri} V.~Apostolov, D.~M.~J.~Calderbank, P.~Gauduchon and C.~W.~T{\o}nnesen-Friedman,
\emph{Extremal K\"ahler metrics on projective bundles over a curve,} Adv. Math. \textbf{227} (2011), no. 6, 2385--2424.

\bibitem{Apo-Cal-Gau-Fri-1} V.~Apostolov, D.~M.~J.~Calderbank, P.~Gauduchon and C.~W.~T{\o}nnesen-Friedman,
\emph{Hamiltonian 2-forms in K\"ahler geometry III : Extremal metrics and stability}, Invent. Math. \textbf{173} (2008), no. 3, 547--601.

\bibitem{Apo-Dra}
V.~Apostolov and T.~Dr\v{a}ghici, 
\emph{The curvature and the integrability of almost-K\"ahler manifolds: a survey,}
Fields Inst. Communications Series {\textbf{35}} AMS (2003), 25--53.

\bibitem{Ber-Get-Ver} N.~Berline, E.~Getzler and M. Vergne, 
\emph{Heat Kernels and Dirac Operators,} Springer Verlag, (1992).

\bibitem{Bor-Uri} D. Borthwick and A.~Uribe, 
\emph{Almost complex structures and geometric quantization,} Math. Res. Lett. \textbf{3} (1996), no. 6, 845--861.

\bibitem{Bor-Uri-1} D. Borthwick and A.~Uribe, 
\emph{Nearly K\"ahlerian embeddings of symplectic manifolds,} Asian J. Math. \textbf{4} (2000), no. 3, 599--620.

\bibitem{Cal}
E.~Calabi,
\emph{Extremal K\"ahler metrics, in Seminar of Differential Geometry,} 
S. T. Yau (eds), Annals of Mathematics Studies \textbf{102} Princeton University Press (1982), 259--290.

\bibitem{Cal-1} E.~Calabi,
\emph{Extremal K\"ahler metrics II}, Differential Geometry and Complex Analysis, Springer, Berlin, (1985),  95--114.


\bibitem{CP} P. Cascini, D. Panov, 
\emph{Symplectic generic complex structures on four-manifolds with
              {$b_+=1$}}, {J. Symplectic Geom.} {\bf 10} (2012), no. 4, 493--502.


\bibitem{Col-Sze}
T.~C.~Collins and G.~Sz\'ekelyhidi,
\emph{K-semistability for irregular Sasakian manifolds}, To appear in Jour. Diff. Geom, ArXiv: 1204.2230 (2012).

\bibitem{DLM} X. Dai, K. Liu, X.Ma,
\emph{On the asymptotic expansion of {B}ergman kernel}, J. Differential Geom., \textbf{72} (2006), 1--41.



\bibitem{DVZ} A. Della Vedova and F. Zuddas,
     \emph{Scalar curvature and asymptotic {C}how stability of projective
              bundles and blowups}, Trans. Amer. Math. Soc. {\bf 364} (2012), no. 12, 6495--6511.
     
     
\bibitem{Din-Tia}
W.~Ding and G.~Tian,
\emph{K\"ahler-Einstein metrics and the generalized Futaki invariant,}
Invent. Math. \textbf{110} (1992), no. 2, 315--335.


\bibitem{Don}
S.~K. Donaldson, \emph{Lower bounds on the {C}alabi functional}, J. Differential Geom.
  \textbf{70} (2005), no. 3, 453--472.

\bibitem {Don-1} 
S.~K. Donaldson,
\emph{Remarks on Gauge theory, complex geometry and 4-manifolds topology},
in ``The Fields Medallists Lectures" (eds. M. Atiyah and D. Iagolnitzer), World Scientific (1997), 384--403.

\bibitem{Don-3}
S.~K. Donaldson,
\emph{Scalar curvature and projective embeddings. I}, J. Differential Geom. {\textbf{59}} (2001), no. 3, 479--522.


\bibitem{Don-2}
S.~K. Donaldson,\emph{Scalar curvature and stability of toric varieties,}
J. Differential Geom. {\textbf{62}} (2002), 289--349.

\bibitem{Don-4}
S.~K. Donaldson, \emph{Some numerical results in complex differential geometry},
Pure Appl. Math. Q., {\bf 5} (2009) 571--618.

\bibitem{Don-5} 
S.~K. Donaldson,
\emph{Two-forms on four-manifolds and elliptic equations}, in Inspired by S. S. Chern, World Scientific, (2006).




\bibitem{Fuj}
A.~Fujiki, \emph{Moduli space of polarized algebraic manifolds and K\"{a}hler metrics},
Sugaku Expositions {\textbf {5}} (1992), 173--191.

\bibitem{Fut}
A.~Futaki,
\emph{An obstruction to the existence of Eisntein K\"ahler metrics,}
Invent. Math., \textbf{73} (1983), 437--443.

\bibitem{Gau}
P.~Gauduchon,
\emph{Calabi's extremal K\"ahler metrics: An elementary introduction,} book in preparation.

\bibitem{gau-1}P.~Gauduchon,
\emph{Hermitian connections and Dirac operators,} Boll. Un. Mat. Ital. B (7) {\textbf{11}} (1997), no. 2, suppl., 257--288.


\bibitem{Gui-Uri}V.~Guillemin and A.~Uribe, \emph{The Laplace operator on
the n-th tensor power of a line bundle: eigenvalues which are uniformly bounded in n},
Asymptotic Anal. \textbf{1} (1988), 105--113.

\bibitem{Lej}
M.~Lejmi,
\emph{Extremal almost-K\"ahler metrics},
Internat. J. Math. \textbf{21} (2010), no. 12, 1639--1662.

\bibitem{Li} T.-J.~Li,
\emph{Symplectic Calabi--Yau surfaces,} Adv. Lect. Math. \textbf{14} (2010), Int. Press, Somerville, MA, 231--356. 

\bibitem{lib} P.~Libermann, 
\emph{Sur les connexions hermitiennes,} C. R. Acad. Sci. Paris {\textbf{239}} (1954). 1579--1581.


\bibitem{LM}
K. Liu, X. Ma, \emph{A remark on: ``{S}ome numerical results in complex
              differential geometry'' [arxiv.org/abs/math/0512625] by {S}.
              {K}. {D}onaldson}, Math. Res. Lett. {\bf 14} (2007), 165--171.

\bibitem{Lu-Ma-Mar}
W.~Lu, X.~Ma and G.~Marinescu, \emph{Optimal convergence speed of Bergman metrics on symplectic manifolds},
arXiv:1702.00974, preprint.

\bibitem{Lu-Ma-Mar2}
W.~Lu, X.~Ma and G.~Marinescu, \emph{Donaldson's $Q$-operators for symplectic manifolds}, preprint (Dec. 2016).

\bibitem{Ma-Mar}
X.~Ma and G.~Marinescu, \emph{Generalized Bergman kernels on symplectic manifolds},
Adv. Math. \textbf{217} (2008), no. 4, 1756--1815. 

\bibitem{Ma-Mar-1} X.~Ma and G.~Marinescu, \emph{Holomorphic {M}orse inequalities and {B}ergman
  kernels}, Progress in Mathematics, vol. 254, Birkh\"auser Verlag, Basel,
  2007.  
  
\bibitem{Ma-Mar-2} X.~Ma and G.~Marinescu, \emph{The $Spin^c$ Dirac operator on high tensor powers of a line bundle}, Math. Z. {\bf 240} (2002), no. 3,
651--664. 

\bibitem{Ma-Mar3}
X.~Ma and G.~Marinescu, \emph{{B}erezin-{T}oeplitz quantization on K\"ahler manifolds}, 
J. reine angew. Math. 662 (2012), 1--56.

\bibitem{Duf-Sal}
D.~McDuff and D.~Salamon, 
\emph{Introduction to Symplectic Topology,} second edition, Oxford Mathematical Monographs, Oxford University Press (1998).


\bibitem{Ross-Thomas} J. Ross and R. Thomas,  {\emph An obstruction to the existence of constant scalar curvature                             
              {K}\"ahler metrics}, {J. Differential Geom.}, {\bf 72} (2006), no. 3,  429--466.   

\bibitem{gabor}
G. Sz\'ekelyhidi, \emph{An introduction to extremal {K}\"ahler metrics}, Graduate Studies in Mathematics, 152, American Mathematical Society, Providence, RI (2014)



\bibitem{Tia}
G.~Tian,
\emph{K\"ahler-Einstein metrics with positive scalar curvature,} Invent. Math. \textbf{137} (1997), 1--37.

\bibitem{Tos-Wei}
V.~Tosatti and B.~Weinkove,
\emph{The Calabi-Yau equation, symplectic forms and almost complex structures}, 
Geometry and analysis. No. 1 Adv. Lect. Math. (ALM) \textbf{17} (2011) Int. Press, Somerville, MA, 475--493.

\end{thebibliography}
\end{document}